\newtheorem{theorem}{Theorem}
\newtheorem{example}{Example}
\newcommand{\gf}{\mathfrak{g}}
\begin{document}
\title{Limit shape of probability measure on tensor product of $B_n$ algebra modules}

\author{A.A.~Nazarov  $^1$, O.V.~Postnova  $^2$\\
  {\small $^1$ Department of High-Energy and Elementary Particle Physics,}\\
  {\small St.~Petersburg State University}\\
  {\small 198904, Ulyanovskaya 1, St. Petersburg, Russia}\\
  {\small e-mail: antonnaz@gmail.com}\\
  {\small$^{2}$ Laboratory of Mathematical Problems of Physics,}\\
  {\small St. Petersburg Department
    of Steklov Mathematical Institute}\\
  {\small of Russian Academy of Sciences}\\
  {\small 191023, Fontanka 27, St. Petersburg, Russia}\\
  {\small email: postnova.olga@gmail.com}
}


\maketitle
\begin{abstract}
 We study a probability measure on integral dominant weights in the decomposition of $N$-th tensor power of spinor representation of the Lie algebra $so(2n+1)$. The probability of the dominant weight $\lambda$ is defined as the ratio of the dimension of the irreducible component of $\lambda$ divided by the total dimension $2^{nN}$ of the tensor power. We prove that as $N\to \infty$ the measure weakly converges to the radial part of the $SO(2n+1)$-invariant measure on $so(2n+1)$ induced by the Killing form. Thus, we generalize Kerov's theorem for $su(n)$ to $so(2n+1)$.
\end{abstract}

\section*{Introduction}
\label{sec:introduction} 
Let $V_1, V_2,\dots, V_{k}$ be finite dimensional representations of a simple Lie algebra $\gf$. The tensor product of these representations
is isomorphic to a direct sum of irreducible highest-weight
representations:

\begin{equation} 
V_1\otimes V_2\otimes\dots\otimes V_{k} \cong \bigoplus_{\lambda}W^{1,\dots,k}_{\lambda} \otimes L^{\lambda},
\end{equation}
where $L_{\lambda}$ is the irreducible representation with the highest weight $\lambda$ and
$W^{1,\dots,k}_{\lambda}\simeq \mathrm{Hom}_{\gf}(L^{\lambda},V_{1}\otimes\dots\otimes V_{k})$. The
dimension $M^{1,\dots,k}_{\lambda}=\mathrm{dim}W^{1,\dots,k}_{\lambda}$ is the multiplicity of
$L^{\lambda}$ in the tensor product decomposition. Rewriting this decomposition
equation in terms of dimensions of representations and dividing
by $\mathrm{dim}\cdot V_1\cdot\mathrm{dim}V_2\cdot\dots\cdot\mathrm{dim}V_{k}$, we get:
\begin{equation}
\sum_{\lambda} \frac{M^{1,\dots,k}_{\lambda} \mathrm{dim}L^{\lambda}}{\mathrm{dim}V_1\dots
\mathrm{dim}V_k}=1
\end{equation}
Thus, we have a discrete probability measure on the dominant
integral weights $\lambda$ that appear in this decomposition:
\begin{equation}
  \label{eq:1}
   \mu_N(\lambda) =  \frac{M^{1,\dots,k}_{\lambda} \mathrm{dim}L^{\lambda}}{\mathrm{dim}V_1\dots
\mathrm{dim}V_k}.
\end{equation}

Particularly interesting case of such measure appears when we take the $N$-th tensor
power of vector representation of $su(n+1)$. Because of the Schur-Weyl
duality, in this case $M_{\lambda}$ is the dimension of the irreducible representation of the symmetric group $S_N$. Kerov \cite{kerov1986asymptotic} discovered that as $N\to \infty$ the discrete
measures \eqref{eq:1} in the weight space $\mathbb{R}^{n}$ of $su(n+1)$ converge
weakly to some continuous measure on the main Weyl chamber.

In this paper we consider $\gf=B_n$ and tensor powers of the spinor representation $L^{\omega_n}$. We prove the weak convergence of measures
\eqref{eq:1} on the main Weyl chamber of the weight space for $B_{n}$ to the measure induced by the $G$-invariant
Euclidean measure on $\gf$.

In Theorem \ref{local} we consider the small domain $U(N)\subset \mathbb{R}^n$ the volume of which goes to zero as $N\to\infty$. We show that the probability density function of the limit measure is given by the formula :
\begin{equation*}\phi\left(\{x_i\}\right)=
\prod_{i< j}(x_i^2-x_j^2)^2\prod_{l=1}^{n}x_l^2
\exp\left(-\frac{1}{2}\sum_kx_k^2\right)\cdot
\frac{2^{2n}n!}{(2n)!(2n-2)!\dots 2!},
\end{equation*}
where $x_{i}=\frac{1}{\sqrt{N}}a_i$ and $a_{i}=\lambda_i+\rho_i$ are the shifted Euclidean coordinates on weight
space of $B_{n}$, where $\rho$ is the Weyl vector.

In Theorem \ref{global} we show that that the same holds for every $n$-orthotope in the main Weyl chamber.

In Theorem \ref{weak} we show that the measures converge weakly on the entire main Weyl chamber.

Kerov's proof for the $A_{n}$-case was based upon the hook formulas and Young diagrams for tensor
product decomposition multiplicities and dimensions of representations. In this it was similar to
the famous Vershik-Kerov \cite{vershik1977asymptotics,vershik1985asymptotic} and Logan-Shepp
\cite{logan1977variational} result on the limit shape of Young diagrams. Relation of the Young
diagrams limit shape to the random matrices was established in the papers
\cite{okounkov2000random,borodin2000asymptotics}. The weak convergence was established from the
comparison of the discrete probability measure to multinomial distribution and the use of de
Moivre-Laplace theorem.

Unfortunately, there are no analogues of hook formulas for the other Lie algebras. Thus we use
combinatorial formula for tensor product decomposition coefficients \cite{kulish2012tensor} and Weyl dimension formula to
prove the analogue of de Moivre-Laplace theorem \cite{gnedenko2017theory,stirling2014notes} for an
arbitrary $n$-orthotope in the main Weyl chamber, and then we apply a criterion for weak convergence
of probability measures \cite{bogachev2016slabaya}.

The paper is organized as follows. In Section \ref{sec:tens-prod-decomp} we fix the notations and give the definition
of probability measures on subsets of dominant integral weights. In Section \ref{sec:tens-prod-decomp-1} we present the formula for
tensor product decomposition multiplicities for tensor powers of spinor representation of the
algebra $B_{n}$. In Section \ref{sec:limit-infin-tens} we take limit of infinite tensor
power and prove the convergence of discrete probability measures on subsets of dominant integral
weights to a continuous probability measure on the main Weyl chamber. In the Conclusion we discuss the
connection to  the invariant measure and further work.

\section{Tensor product decomposition and definition of probability measure}
\label{sec:tens-prod-decomp}

Consider a simple Lie algebra $\gf=B_{n}$ of rank $n$. Denote simple roots of $\gf$ by $\alpha_{1},\dots,
\alpha_{n}$ and fundamental weights by $\omega_{1},\dots \omega_{n}$,
$(\alpha_{i},\omega_{j})=\delta_{ij}$. We denote an irreducible highest weight representation of
$\gf$ with the highest weight $\lambda$ by $L^{\lambda}$.

The root system is denoted by $\Delta=\{\pm e_i \pm e_j|_{i\neq j}\}\cup\{\pm e_i\}$, 
$\Delta^{+}=\{e_i+e_j|_{i\leq j}\}\cup\{e_i\}\cup\{e_j-e_i|_{j\leq i}\}$ is the subset of positive
roots, and $\rho=\omega_1+\dots+\omega_n$ is the Weyl vector.

We consider the multiplicity function for the tensor power of the
last fundamental module $\omega_n=\frac{1}{2}(e_1+\dots e_n)$.
We decompose $N$-th tensor power of $L^{\omega_{n}}$ into the direct sum of the irreducible
representations:
\begin{equation}
\label{eq:6}
  \left(L^{\omega_{n}}\right)^{\otimes N} = \bigoplus_{\lambda\in P^{+}(\omega_{n},N)} M^{\omega_{n},N}_{\lambda} L^{\lambda},
\end{equation}
where $P^{+}(\omega_{n},N)$ is the subset of dominant integral weights of the (reducible) representation
$\left(L^{\omega_{n}}\right)^{\otimes N}$. 

The relation \eqref{eq:1} gives us a probability measure on
$P^{+}(\omega_{n},N)$ with the probability density function
\begin{equation}
  \label{eq:2}
  \mu_{N}(\lambda) = \frac{M^{\omega_{n},N}_{\lambda} \dim L^{\lambda}}{\left(\dim L^{\omega_{n}}\right)^{N}}.
\end{equation}
It is easy to see that $\sum_{\lambda\in P(\omega_n,N)} \mu_N(\lambda) =1$, since the dimension of the
l.h.s. of the equation \eqref{eq:6} is equal to $\left(\dim L^{\omega_n}\right)^{N}$.

In present paper we will study this measure in the limit $N\to\infty$ for $n$ fixed.

\section{Multiplicity formula for tensor product decomposition}
\label{sec:tens-prod-decomp-1}

The papers
\cite{kulish2012tensor,kulish2012tensorb2,kulish2012multiplicity,kulish2012multiplicityJPC} proposed a method
for decomposing tensor powers of fundamental module of smallest dimension
for the simple Lie algebras of series
$A_{n}$ and $B_{n}$.

Instead of focusing on the multiplicity function $M^{\omega,N}_{\lambda}$, that is defined on the
set of dominant weights $P^{+}$, this method is aimed at finding the expression for the extended
multiplicity function $\tilde{M}^{\omega,N}_{\lambda}$ defined on the whole weight lattice $P$
as:
\begin{equation}
\tilde{M}^{\omega,N}_{w(\lambda +\rho) -\rho}|_{ w \in W} =\epsilon \left( w\right)M^{\omega,N}_{\lambda}.
\label{mult function}
\end{equation}

If the expression for $\tilde{M}^{\omega,N}_{\lambda}$ is obtained, we could find
$M^{\omega,N}_{\lambda}$ as its contraction to the set of dominant weights $P^{+}$ that lies in the main Weyl chamber.

It was shown in \cite{kulish2012multiplicity,kulish2012multiplicityJPC} that the extended multiplicity function $\tilde{M}^{\omega,N}_{\lambda}$
is a solution of the set of recurrence relations:
\begin{equation}
\sum_{\xi \in P}\tilde{M}^{\omega,N}_{\xi}e^{ \xi }
  =\mathcal{N}\left( L_{\mathfrak{g}}^{\left( \omega\right) }\right) \sum_{\gamma \in P}\tilde{M}^{\omega,N-1}_{\gamma}e^{ \gamma },  \label{our t-fusion}
\end{equation}
where $\mathcal{N}\left( L_{\mathfrak{g}}^{\left( \omega\right) }\right)$ is a weight diagram of the module $L_{\mathfrak{g}}^{\left( \omega\right)}$.

It has also been proven that in case of fundamental modules of smallest dimension the solution of \eqref{our t-fusion} is uniquely determined by the
requirements of anti invariance with respect to the Weyl group transformations and the boundary
conditions.

In case of $B_n$ algebra this method allowed us to obtain the multiplicity function for the tensor power of the last fundamental module $\omega_n=\frac{1}{2}(e_1+\dots + e_n)$. This expression has an explicit dependence on $N$:
 \begin{equation}
    \tilde{M}^{\omega_{n},N}_{\lambda(a_1\dots a_{n})}=
   \prod_{k=0}^{n-1}\frac{\left(
N+2k\right) !}{2^{2k}\left( \frac{N+a_{k+1}+2n-1}{2}\right) !\left( \frac{%
N-a_{k+1}+2n-1}{2}\right) !}\prod_{l=1}^{n}a_{l}\prod_{ i<j %
}\left( a_{i}^{2}-a_{j}^{2}\right)\rule{0mm}{8mm}
   \label{bn}
   \end{equation} 
here $\{a_i\}$ are the coordinates of $\lambda$ in the basis
$\left\{\{\vec{\tilde{e_{i}}}\}: \vec{\tilde{e_{i}}}\parallel \vec{e_i},
  |\tilde{e_{i}}|=|\frac{e_i}{2}|\right\}$ with the center of coordinates shifted to
$-\rho=-\omega_1\dots -\omega_n$. The factors in the numerator vanish at the boundaries of the
shifted Weyl chambers and the denominator provides that $\tilde{M}^{\omega_{1},N}_{\lambda}$
satisfies the boundary conditions and also ensures that the whole expression is anti invariant
w.r.t. Weyl group transformations.
   
Note that there are two congruence classes of weights, one is parametrized by even values of
$a_{i}$ while another by odd. The class is determined by the parity of $N$. For $N$ even we get
$a_{i}$ odd and vice versa.

The expression after $(N+2k)!$ in the numerator is connected to the Weyl dimension formula for the irreducible module
$L^{\lambda}$:
\begin{equation}
  \label{eq:5}
  \dim L^{\lambda} = \prod_{\alpha\in\Delta^{+}} \frac{(\lambda+\rho,\alpha)}{(\rho,\alpha)},
\end{equation}
which for the case of $B_n$ module has the form:
\begin{equation}
\label{weyl}
\dim L^{\lambda}=\frac{\prod_{i< j}(a_i^2-a_j^2)\prod_{l=1}^{n}a_l}{(2n)!(2n-2)!\dots 2!}\cdot 2^{-n^2+2n}n!.
\end{equation}
Thus we obtain the discrete probability measure  with density function (or the probability mass function):
\begin{multline}
\label{meas}
\mu_N(\lambda)=
\mu_N(\{a_i\})
=\frac{\tilde{M}^{\omega_{n},N}_{\lambda(a_1\dots a_{n})}\dim L^{\lambda}}{(2^n)^N}=\\
=
\prod_{k=0}^{n-1}\frac{\left(
N+2k\right) !}{2^{2k}\left( \frac{N+a_{k+1}+2n-1}{2}\right) !\left( \frac{%
N-a_{k+1}+2n-1}{2}\right) !}\prod_{i< j}(a_i^2-a_j^2)^2\prod_{l=1}^{n}a_l^2\cdot\frac{2^{-n^2+2n-nN}n!}{{(2n)!(2n-2)!\dots 2!}}.
\end{multline}

\section{Limit of the infinite tensor product for a finite-rank algebra}
\label{sec:limit-infin-tens}
Let now the $n$-dimensional random vector $X$ be distributed according to the discrete measure with
density function (or, more correctly, the probability mass function) \eqref{meas}:
$X\sim \mu_N(\{a_i\}): U\subset P^{+}\to \mathbb{R}$. Here $P^{+}$ is the dominant weight lattice.
$$\begin{cases}

0\leq\mu_N(U)\leq 1  \\ \mu_N(P^{+})=1 

\end{cases}$$

We fix $n$ in the formula \eqref{meas} and study the limit
$N\to\infty$ to see that this probability mass function converges to continuous probability density
function $\phi\{x_i\}:U\subset\mathbb{R}^{n}\to \mathbb{R}$.

Firstly, we embed $P^{+}\subset\mathbb{R}^{n}$ the following way. Let us associate to the weight $\lambda$ in $P^{+}$ with coordinates $\{a_i\}$ the domain $U_{a}=\cup_i[a_i-1,a_i+1)\in \mathbb{R}^{n}$. There will only be one weight inside this domain. Consider the probability mass function of vector $X$:
\begin{multline}
\mu_N(\lambda)=
p_X(\lambda)=\mathbf{P}\{X=\lambda\}=\mathbf{P}\{X\in U_{a}\}=
\mathbf{P}\{a_i-1\leq X_i< a_i+1\}=\\=\mathbf{P}\left\{\frac{1}{\sqrt{N}}(a_i-1)\leq \frac{1}{\sqrt{N}}X_i< \frac{1}{\sqrt{N}}(a_i+1)\right\}=
\mathbf{P}\left\{\frac{1}{\sqrt{N}}X\in U_{a}(N)\right\}.
\end{multline}
The volume of the rescaled domain $U_{a}(N)$ goes to zero as $N\to\infty$.

Then as $N\to\infty$ we would expect convergence on this domain
\begin{equation}
\left|p_X(\lambda)\cdot\left( \frac{\sqrt{N}}{2}\right)^n-\phi\left(\left\{\frac{1}{\sqrt{N}}a_i\right\}\right)\right|\longrightarrow 0
\end{equation}

\begin{theorem}
\label{local}
Let $X\sim \mu_N(\{a_i\})$ and $C_N$ be nondecreasing such that $\displaystyle \lim_{N\longrightarrow\infty} C_N/N^{\frac{1}{6}}=0$. Then
\begin{equation}
\max_{|a_i+2n-1|<\sqrt{N}\cdot C_N}\left|\frac{p_X(\lambda)}{\phi\left(\{x_i\}\right)}\left(\frac{\sqrt{N}}{2}\right)^n-1\right|=\mathcal{O}\left(\frac{C_N^3}{\sqrt{N}}\right),
\end{equation}
where $x_i=\frac{1}{\sqrt{N}}a_i 
$ and
\begin{equation}
  \phi\left(\{x_i\}\right)= \prod_{i< j}(x_i^2-x_j^2)^2\prod_{l=1}^{n}x_l^2 \exp\left(-\frac{1}{2}\sum_kx_k^2\right)\cdot   \frac{2^{2n}n!}{(2n)!(2n-2)!\dots 2!}
\end{equation}
\end{theorem}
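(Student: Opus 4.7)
The plan is to derive the asymptotic of \eqref{meas} by applying Stirling's formula to each factorial and Taylor-expanding the resulting logarithms. Under the rescaling $a_i=\sqrt{N}\,x_i$ the polynomial factor transforms exactly as $\prod_{i<j}(a_i^2-a_j^2)^2\prod_l a_l^2 = N^{n^2}\prod_{i<j}(x_i^2-x_j^2)^2\prod_l x_l^2$, reproducing the polynomial content of $\phi$ up to a power of $N$; no asymptotics are needed here. All the work lies in the $n$ factorial quotients
\[
\mathcal F_k:=\frac{(N+2k)!}{2^{2k}\bigl(\tfrac{N+a_{k+1}+2n-1}{2}\bigr)!\bigl(\tfrac{N-a_{k+1}+2n-1}{2}\bigr)!},\qquad k=0,\dots,n-1.
\]

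Setting $S=N+2n-1$ and rewriting
\[
\mathcal F_k=\frac{(N+2k)!}{2^{2k}(N+2n-1)!}\binom{S}{\tfrac{S+a_{k+1}}{2}},
\]
the rational prefactor expands in $1/N$ to any order, while the binomial coefficient is controlled by the local central limit theorem for the symmetric random walk of length $S$. Applying Stirling to $S!$ and $((S\pm a_{k+1})/2)!$ and using the even Taylor series $(1+\alpha)\ln(1+\alpha)+(1-\alpha)\ln(1-\alpha)=\alpha^2+\alpha^4/6+\cdots$ in $\alpha_k=a_{k+1}/S$ yields
\[
\binom{S}{\tfrac{S+a_{k+1}}{2}}=\frac{2^S}{\sqrt{\pi S/2}}\exp\!\Bigl(-\frac{a_{k+1}^2}{2S}\Bigr)\bigl(1+O(C_N^3/\sqrt{N})\bigr)
\]
uniformly for $|a_{k+1}+2n-1|<\sqrt{N}\,C_N$; the rate combines the Stirling remainder $O(1/N)$, the quartic Taylor remainder $O(C_N^4/N)$, and the generic Edgeworth-type bound $O(|a|^3/S^2)=O(C_N^3/\sqrt N)$, which is the dominant source under the hypothesis $C_N/N^{1/6}\to 0$.

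Multiplying the $n$ asymptotics for $\mathcal F_0,\dots,\mathcal F_{n-1}$, the exponentials combine into $\exp(-\tfrac12\sum_k x_k^2)$ and the powers of $2$ and $N$ collect. Together with the polynomial factor and the explicit constant $C_n = 2^{-n^2+2n-nN}n!/((2n)!(2n-2)!\cdots 2!)$ from \eqref{meas}, all $N$-dependences cancel except for the volume factor $(\sqrt{N}/2)^{-n}$, reconstructing $\phi$. Since each remainder is uniform on the cube $|a_{k+1}+2n-1|<\sqrt{N}\,C_N$, taking the maximum yields the stated bound.

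The main obstacle is the arithmetic bookkeeping: following the powers of $N$, $2$ and $\sqrt{2\pi}$ coming from each of the $3n$ factorials and verifying that everything cancels against $C_n$ and the $N^{n^2}$ from the polynomial factor, leaving exactly the constant and volume appearing in the theorem. The uniform error analysis is comparatively routine, but one must verify that under $C_N=o(N^{1/6})$ the Edgeworth-type remainder $O(C_N^3/\sqrt N)$ dominates both the Stirling remainder $O(1/N)$ and the quartic Taylor remainder $O(C_N^4/N)$, so that the composite multiplicative error is indeed of the claimed order.
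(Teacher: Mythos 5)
Your proposal is correct and follows essentially the same route as the paper: Stirling's formula applied to each factorial in \eqref{meas}, Taylor expansion of the resulting logarithms uniformly on the region $|a_i+2n-1|<\sqrt{N}\,C_N$, exact rescaling of the polynomial factor, and the same $\mathcal{O}(C_N^3/\sqrt{N})$ error accounting. Your repackaging of each factorial quotient as a rational prefactor times $\binom{S}{(S+a_{k+1})/2}$ with $S=N+2n-1$ is a cleaner bookkeeping device than the paper's direct expansion of numerator and denominator, but the analytic content is identical.
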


\begin{proof}
Consider the factor in \eqref{meas} that depends on $N$:
\begin{equation}
\label{in}
I_{N,n}=\prod_{k=0}^{n-1}\frac{\left(
N+2k\right) !}{2^{2k}\left( \frac{N+a_{k+1}+2n-1}{2}\right) !\left( \frac{%
N-a_{k+1}+2n-1}{2}\right) !}
\end{equation}

We will treat the numerator of $I_{N,n}$ and the denominator
\begin{equation}
  \label{eq:8}
    D_{N,n}=\prod_{k=0}^{n-1}{2^{2k}\left( \frac{N+a_{k+1}+2n-1}{2}\right) !\left( \frac{%
N-a_{k+1}+2n-1}{2}\right) !},
\end{equation}
separately. 

To obtain asymptotics of 
\eqref{in} we will be using the Stirling formula for
factorials
\begin{equation}
  \label{stirl}
  N!\approx \sqrt{2\pi}\; \mathrm{exp}\left(N\ln N -N+\frac{1}{2}\ln N\right)\left(1+\mathcal{O}\left(\frac{1}{N}\right)\right),
\end{equation}
and assuming that $n\ll N$ we will use the following expansion of logarithm to the order of $\frac{1}{N^{2}}$:
\begin{equation}
\ln(N+i)=\ln N\left(1+\frac{i}{N}\right)=\ln N +\frac{i}{N}-\frac{i^2}{2N^2}+\mathcal{O}{\left(\frac{1}{N^3}\right)}
\end{equation}
\begin{equation}
S=\sum_{i=1}^{n}(N+i)\ln(N+i)=\sum_{i=1}^{n}\frac{1}{N}\left(-\frac{i^{3}}{2N}+\frac{i^2}{2}+N(1+\ln N)i\right)+Nn\ln N+\mathcal{O}\left(\frac{1}{N^{2}}\right).
\end{equation}
Since the first term of the sum is of order $\frac{1}{N^{2}}$ we can include it inside the error:

\begin{equation}
\label{lnexp}
S=\sum_{i=1}^{n}\frac{1}{N}\left(
\frac{i^2}{2}+N(1+\ln N)i\right)+Nn\ln N+\mathcal{O}\left(\frac{1}{N^{2}}\right).
\end{equation}
Then as $N\longrightarrow\infty$ we apply the Stirling formula to the numerator of $I_{N,n}$:
\begin{multline}
\prod_{k=0}^{n-1}\left(N+2k\right) !=
\prod_{k=1}^{n}\left(
N+2k-2\right) !\simeq\\
\simeq
\prod_{k=1}^{n}\sqrt{2\pi}
\exp
\left[
(N+2k-2)\ln(N+2k-2)-(N+2k-2)
\right]\prod_{k=1}^{n}(N+2k-2)^{\frac{1}{2}}
\left(1+\mathcal{O}\left(\frac{1}{N}\right)\right)=\\
=(\sqrt{2\pi})^n
\exp
\left[\sum_{k=1}^{n}
(N+2k-2)\ln(N+2k-2)-(N+2k-2)
\right]\prod_{k=1}^{n}(N+2k-2)^{\frac{1}{2}}\left(1+\mathcal{O}\left(\frac{1}{N}\right)\right).
\end{multline}
Using \eqref{lnexp} we will now expand the sum under the exponent:
\begin{multline}
\label{num}
\prod_{k=0}^{n-1}\left(N+2k\right)! \simeq\\\simeq(\sqrt{2\pi})^n
\exp
\left[\sum_{k=1}^{n}
\left(
\frac{(2k-2)^2}{2N}+(1+\ln N)(2k-2)
+N\ln N
-(N+2k-2)\right)
+\mathcal{O}\left(\frac{1}{N^2}\right)\right]\cdot\\
\cdot
\prod_{k=1}^{n}(N+2k-2)^{\frac{1}{2}}
\left(1+\mathcal{O}\left(\frac{1}{N}\right)\right)=\\
=(\sqrt{2\pi})^n
\exp
\left[
\frac{1}{N}\frac{4n^3-6n^2+2n}{6}
+n(n-1)-n(N+n-1)\right]N^{n(n-1)}N^{nN}
\cdot\\
\cdot
\prod_{k=1}^{n}(N+2k-2)^{\frac{1}{2}}
\left(1+\mathcal{O}\left(\frac{1}{N^2}\right)\right)
\left(1+\mathcal{O}\left(\frac{1}{N}\right)\right)=\\
=
(\sqrt{2\pi})^n
\exp
\left[-nN+
\frac{1}{N}\frac{4n^3-6n^2+2n}{6}
\right]N^{n(n-1)}N^{nN}\prod_{k=1}^{n}(N+2k-2)^{\frac{1}{2}}
\left(1+\mathcal{O}\left(\frac{1}{N}\right)\right).
\end{multline}
Now let us expand the denominator $D_{N,n}$ of $I_{N,n}$. We can expand logarithms such as $\ln(1+\frac{a_k+2n-1}{N})$  in the interval $\frac{|a_k+2n-1|}{N}<\frac{C_k}{\sqrt{N}}\ll 1$. We will choose $C_N$ that satisfies this condition for the largest $a_k$. Therefore the value of $\frac{|a_k+2n-1|}{N}$ will be bounded for all $a_k$ as well as the value of $\frac{|-a_k+2n-1|}{N}$. 

Applying the Stirling formula for the denominator of $I_{N,n}$ we get:
\begin{multline}
D_{N,n}=\prod_{k=1}^{n}2^{2(k-1)}\left( \frac{N+a_{k}+2n-1}{2}\right) !\left( \frac{%
N-a_{k}+2n-1}{2}\right) !\simeq2^{n(n-1)}(\sqrt{2\pi})^{2n}\cdot\\
\cdot\prod_{k=1}^{n}\exp\left[\frac{N+a_{k}+2n-1}{2}\ln\left(\frac{N+a_{k}+2n-1}{2}\right)-\frac{N+a_{k}+2n-1}{2}\right]\prod_{k=1}^{n}
\left(
\frac{N+a_{k}+2n-1}{2}
\right)^{\frac{1}{2}}\cdot\\
\cdot
\prod_{k=1}^{n}\exp\left[\frac{N-a_{k}+2n-1}{2}\ln\left(\frac{N-a_{k}+2n-1}{2}\right)-\frac{N-a_{k}+2n-1}{2}\right]\prod_{k=1}^{n}
\left(
\frac{N-a_{k}+2n-1}{2}
\right)^{\frac{1}{2}}
\end{multline}
This expansion has the error bound of $\left(1+\mathcal{O}\left(\frac{1}{N}\right)\right)$. To simplify calculations we will combine some factors into one factor $Z(n,N)$. So we get for the denominator:
\begin{multline}
D_{N,n}=\underbrace{2^{n(n-1)}(\sqrt{2\pi})^{2n}
\exp\left(-n(N+2n-1)\right)\prod_{k=1}^{n}
\left(
\frac{N-a_{k}+2n-1}{2}
\right)^{\frac{1}{2}}
\left(
\frac{N+a_{k}+2n-1}{2}
\right)^{\frac{1}{2}}}_{Z(n,N)}
\cdot\\
\cdot
\exp\left[\sum_{k=1}^{n}\frac{N-a_{k}+2n-1}{2}\ln\left(\frac{N-a_{k}+2n-1}{2}\right)+\frac{N+a_{k}+2n-1}{2}\ln\left(\frac{N+a_{k}+2n-1}{2}\right)\right]
\cdot
\\
\cdot
\left(1+\mathcal{O}\left(\frac{1}{N}\right)\right)=\\
=Z(n,N)\cdot
\exp
\left[\frac{1}{2}\sum_{k=1}^{n}(N+a_k+2n-1)\ln(N+a_k+2n-1)+(N-a_k+2n-1)\ln(N-a_k+2n-1)\right]\cdot\\
\cdot \exp\left[-\sum_{k=1}^{n}(N+2n-1)\ln 2\right]\left(1+\mathcal{O}\left(\frac{1}{N}\right)\right)
\end{multline}
Using \eqref{lnexp} we expand the sum under the exponent and get the approximate expression for the denominator:
\begin{multline}
  D_{N,n}\simeq 2^{-n(N+2n-1)}Z(n,N)\cdot\\
  \cdot
\exp
\left[\frac{1}{2N}\sum_{k=1}^{n}\left(\frac{(a_k+2n-1)^2}{2}+N(1+\ln N)(a_k+2n-1)+N^2\ln N\right)+\mathcal{O}\left(\frac{C_N^3}{\sqrt{N}}\right)\right]\cdot\\
\cdot 
\exp
\left[\frac{1}{2N}\sum_{k=1}^{n}\left(\frac{(-a_k+2n-1)^2}{2}+N(1+\ln N)(-a_k+2n-1)+N^2\ln N\right)+\mathcal{O}\left(\frac{C_N^3}{\sqrt{N}}\right)\right]\cdot\\
\cdot \left(1+\mathcal{O}\left(\frac{1}{N}\right)\right)=\\=
2^{-n(N+2n-1)}Z(n,N)
\exp
\left[\sum_{k=1}^{n}\frac{a_k^2}{2N}\right]\exp
\left[\frac{4n^3-4n^2+n}{2N}+n(2n-1)+n(2n-1)\ln N+ nN\ln N \right]\cdot\\
\cdot\left(1+\mathcal{O}\left(\frac{C_N^3}{\sqrt{N}}\right)\right)\left(1+\mathcal{O}\left(\frac{1}{N}\right)\right)
\end{multline}
Extracting the factors from $Z(n,N)$ we get:
\begin{multline}\label{den}
D_{N,n}\simeq 2^{-nN-n^2}(\sqrt{2\pi})^{2n}N^{nN+n(2n-1)}\exp \left[\sum_{k=1}^{n}\frac{a_k^2}{2N}\right]\exp
\left[-nN+\frac{4n^3-4n^2+n}{2N}\right]
\cdot\\
\cdot
\prod_{k=1}^{n}
\left(
\frac{N-a_{k}+2n-1}{2}
\right)^{\frac{1}{2}}
\left(
\frac{N+a_{k}+2n-1}{2}
\right)^{\frac{1}{2}}\left(1+\mathcal{O}\left(\frac{C_N^3}{\sqrt{N}}\right)\right)\left(1+\mathcal{O}\left(\frac{1}{N}\right)\right).
\end{multline}
Then, dividing \eqref{num} by \eqref{den} we obtain:
\begin{multline}
I_{N,n}\simeq\frac{2^{nN+n^2}}{(\sqrt{2\pi})^n}N^{-n^2}\exp \left[-\sum_{k=1}^{n}\frac{a_k^2}{N}\right]\exp\left[\frac{-8n^3-6n^2-n}{3N}\right]
\underbrace{\prod_{k=1}^{n}\frac{(N+2k-2)^{\frac{1}{2}}}{\left(
\frac{N-a_{k}+2n-1}{2}
\right)^{\frac{1}{2}}
\left(
\frac{N+a_{k}+2n-1}{2}
\right)^{\frac{1}{2}}}}_{d_{N,n}}\cdot\\
\cdot
\left(1+\mathcal{O}\left(\frac{1}{N}\right)\right)
\left(1+\mathcal{O}\left(\frac{C_N^3}{\sqrt{N}}\right)\right)\left(1+\mathcal{O}\left(\frac{1}{N}\right)\right).
\end{multline}
We denote the product over $k$ by $d_{N,n}$ and expand $d_{N,n}$ separately:
\begin{multline}
d_{N,n}=\exp\left[\frac{1}{2}\sum_{k=1}^{n}\left\{
\ln\left(N\left(1+\frac{2k-2}{N}\right)\right)-
\ln\left(N\left(\frac{1}{2}+\frac{a_k+2n-1}{2N}\right)\right)-\right.\right.\\
\left.\left.
-\ln\left(N\left(\frac{1}{2}+\frac{-a_k+2n-1}{2N}\right)\right)
\right\}\right]\simeq\\
\simeq
\exp\left[
\frac{1}{2N}\sum_{k=1}^{n}\left(2(k-1)-(a_k+2n-1)-(-a_k+2n-1)-N\ln N-2N\ln{\frac{1}{2}}\right)+\mathcal{O}\left(\frac{C_N^2}{N^2}\right)
\right]\cdot\\
\cdot
\left(1+\mathcal{O}\left(\frac{1}{N}\right)\right)\simeq\\
\simeq
2^n
N^{-\frac{n}{2}}\exp\left[\frac{-n^2-n}{N}\right]\left(1+\mathcal{O}\left(\frac{C_N^2}{N^2}\right)\right)\left(1+\mathcal{O}\left(\frac{1}{N}\right)\right)
\end{multline}
and choosing the largest of the error bounds we obtain:
\begin{multline}
I_{N,n}\simeq\frac{2^{nN+n^2+n}}{(\sqrt{2\pi})^n}N^{-n^2-\frac{n}{2}}
\exp \left[-\sum_{k=1}^{n}\frac{a_k^2}{2N}\right]\exp\left[\frac{-8n^3-6n^2-n}{3N}\right]
\left(1+\mathcal{O}\left(\frac{C_N^3}{\sqrt{N}}\right)\right).
\end{multline}
Finally, for the measure density \eqref{meas} we get:
\begin{multline}
\label{asmeas}
\lim_{N\longrightarrow \infty}p_X(\lambda)
=
\left(\frac{1}{N}\right)^{n^2+n/2}\frac{2^{3n}}{(\sqrt{2\pi})^n}\frac{n!}{{(2n)!(2n-2)!\dots 2!}}
\prod_{i<j}(a_i^2-a_j^2)^2\prod_{l=1}^{n}a_l^2\exp \left[-\sum_{k=1}^{n}\frac{a_k^2}{N}\right]=\\
=
\left(\frac{2}{\sqrt{N}}\right)^n
\phi(\{x_i\})\left(1+\mathcal{O}\left(\frac{C_N^3}{\sqrt{N}}\right)\right).
\end{multline} 
where $x_i=\frac{1}{\sqrt{N}}a_i 
$ and
\begin{equation}
\label{phi}
\phi(\{x_i\})
=\frac{2^{2n}n!}{(\sqrt{2\pi})^n(2n)!(2n-2)!\dots 2!}
\prod_{i< j}(x_i^2-x_j^2)^2\prod_{l=1}^{n}x_l^2
\exp\left(-\frac{1}{2}\sum_kx_k^2\right).
\end{equation}
\end{proof}
Above we have proven the local theorem for the small domain $U_{a}(N)$ the volume of which goes to
zero as $N\longrightarrow\infty$. We will now prove the global theorem.
\begin{theorem}
\label{global}
Let $X\sim \mu_N(\{a_i\})$, then for every n-orthotope 
$H_n=\{c_1,d_1\}\times\{c_2,d_2\}\times
\dots\times\{c_n,d_n\}\subset P^{+}$ where all $\{c_i\}<\{d_i\}$ are fixed real numbers, then 
\begin{equation}
\lim_{N\longrightarrow\infty} \mathbf{P}\left\{c_i\leq \frac{1}{\sqrt{N}}X_i< d_i\right\}=\int_{H_n}\phi(\{x_i\})dx_1\dots dx_n
\end{equation}
\begin{proof}
We first use the triangle inequality to obtain
\begin{multline}
\left|\mathbf{P}\{c_i\leq  \frac{1}{\sqrt{N}}X_i< d_i\}-\int_{H_n}\phi(\{x_i\})dx_1\dots dx_n\right|\leq
\left|
\sum_{a_i=\lceil c_i \sqrt{ N}\rceil}
^{\lceil d_i \sqrt{ N}-1\rceil}
p_X(\lambda)-
\int_{H_n}\phi(\{x_i\})dx_1\dots dx_n
\right|\leq\\
\leq
\left|
\sum_{a_i=\lceil c_i \sqrt{N}\rceil}
^{\lceil d_i \sqrt{ N}-1\rceil}
\left(p_X(\lambda)-
\left( \frac{2}{\sqrt{N}}\right)^n\phi\left( \frac{\{a_i\}}{\sqrt{N}}\right)\right)
\right|+\\
+
\left|
\sum_{a_i=\lceil c_i \sqrt{ N}\rceil}
^{\lceil d_i \sqrt{ N}-1\rceil}
\left( \frac{2}{\sqrt{N}}\right)^n\phi\left(\frac{\{a_i\}}{\sqrt{N}}\right)-
\int_{H_n}\phi(\{x_i\})dx_1\dots dx_n
\right|
\end{multline}
The second term is the difference between the integral and its Riemann sum, therefore it goes to
zero. For the first term, let $c>\max(a,b)$ then
\begin{multline}
\max_{|a_i+2n-1|<\sqrt{N}\cdot c}\left|p_X(\lambda)-
\left( \frac{2}{\sqrt{N}}\right)^n\phi\left( \frac{\{a_i\}}{\sqrt{N}}\right)\right|
=\\
=
\max_{|a_i+2n-1|<\sqrt{N}\cdot c}\left|\frac{p_X(\lambda)\left(\frac{\sqrt{N}}{2}\right)^n}{\phi\left( \frac{\{a_i\}}{\sqrt{N}}\right)}-1\right|
\cdot
\left( \frac{2}{\sqrt{N}}\right)^n
\phi\left( \frac{\{a_i\}}{\sqrt{N}}\right)\leq\\
\leq
\mathcal{O}\left(\frac{1}{N}\right)\cdot
\mathcal{O}\left(\frac{1}{N^{n/2}}\right)=
\mathcal{O}\left(\frac{1}{N^{n/2+1}}\right)
\end{multline}
There are $n$ sums of these expressions and each has $\mathcal{O}(\sqrt{N})$ summands. Therefore
there is total $\mathcal{O}\left(N^{n/2}\right)$ summands, and the sum is in the order of
$\mathcal{O}\left(\frac{1}{N}\right)$.
\end{proof}
\end{theorem}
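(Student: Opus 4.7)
The plan is to reduce the global statement to the local asymptotic of Theorem \ref{local} by a standard Riemann-sum argument. First I would express the probability as a discrete sum of $p_X(\lambda)$ over those dominant weights $\lambda=\lambda(a_1,\dots,a_n)$ whose coordinates satisfy $c_i\sqrt{N}\leq a_i<d_i\sqrt{N}$, recalling that (by the congruence-class remark after \eqref{bn}) the admissible $a_i$ form a sublattice with spacing $2$, so after rescaling by $1/\sqrt{N}$ the effective mesh in each coordinate is $2/\sqrt{N}$. This matches the prefactor $(2/\sqrt{N})^n$ appearing in the local approximation \eqref{asmeas}.

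Next I would apply the triangle inequality to bound
\[
\Bigl|\mathbf{P}\bigl\{c_i\le \tfrac{1}{\sqrt{N}}X_i<d_i\bigr\}-\int_{H_n}\phi(\{x_i\})\,dx_1\cdots dx_n\Bigr|\leq S_1+S_2,
\]
where $S_1$ collects the pointwise approximation errors $\bigl|p_X(\lambda)-(\tfrac{2}{\sqrt{N}})^n\phi(\{a_i/\sqrt{N}\})\bigr|$ summed over the lattice points in the rescaled box, and $S_2$ is the difference between the Riemann sum of $\phi$ on this lattice and its integral over $H_n$. The term $S_2$ tends to zero by standard Riemann-sum convergence, since $\phi$ is continuous and bounded on the compact orthotope $H_n$ and the mesh size $2/\sqrt{N}$ shrinks to zero.

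For $S_1$, since the $c_i,d_i$ are fixed, I can pick any $C_N$ that eventually dominates $\max_i(|c_i|,|d_i|)+2n-1$ while still satisfying $C_N/N^{1/6}\to 0$ (for example $C_N=\log N$). Every lattice point of the rescaled box then falls in the range where Theorem \ref{local} applies, and rewriting the pointwise bound gives
\[
\bigl|p_X(\lambda)-(\tfrac{2}{\sqrt{N}})^n\phi(\{a_i/\sqrt{N}\})\bigr|\leq \mathcal{O}\!\bigl(C_N^3/\sqrt{N}\bigr)\cdot (\tfrac{2}{\sqrt{N}})^n\phi(\{a_i/\sqrt{N}\})
\]
uniformly over such points. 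The sum $\sum (\tfrac{2}{\sqrt{N}})^n\phi(\{a_i/\sqrt{N}\})$ is (by the $S_2$ argument) bounded by $\int_{H_n}\phi\,dx+o(1)$, so summing the per-point estimate over the $\mathcal{O}(N^{n/2})$ lattice points inside $H_n$ yields $S_1=\mathcal{O}(C_N^3/\sqrt{N})\to 0$.

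The main obstacle, such as it is, is the \emph{uniformity} of the local asymptotic: one needs the error estimate of Theorem \ref{local} to hold with the same implicit constant across all lattice points of $H_n$ simultaneously. This is precisely why the theorem is stated as a maximum over a growing region; since the fixed bounded box $H_n$ is absorbed into the region $|a_i+2n-1|<\sqrt{N}\cdot C_N$ for any slowly growing $C_N$, the uniform bound transfers directly into the summation and both $S_1$ and $S_2$ vanish, establishing the claim.
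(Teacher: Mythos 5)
Your proposal is correct and follows essentially the same route as the paper: the same triangle-inequality split into a pointwise-error sum controlled uniformly by Theorem \ref{local} and a Riemann-sum-versus-integral term, with the same lattice-point counting. The only cosmetic differences are that you bound the error sum by comparison with $\sum(2/\sqrt{N})^n\phi$ rather than by multiplying a per-term bound by the number of summands, and that you make explicit the mesh spacing $2$ coming from the congruence classes, which the paper leaves implicit.
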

We can in fact check that the obtained density function \eqref{phi} defines a probability measure by integrating it over the main Weyl chamber. We will use  the following integral, studied by Macdonald in \cite{macdonald1982some}:
\begin{equation}
\label{mac}
\frac{1}{(2\pi)^{n/2}}\int\dots\int\prod_{i=1}^{n}(2|x_i|^2)^{\gamma}\prod_{1\leq i<j\leq n}\left|x_i^2-x_j^2\right|^{2\gamma} \exp\left(-\sum_{k=1}^{n}\frac{|x_k|^2}{2}\right)dx_1\dots dx_n=\prod_{j=1}^{n}\frac{\Gamma(1+2j\gamma)}{\Gamma(1+\gamma)}
\end{equation}
It is easy to see that if we integrate \eqref{phi} over the whole space, following Macdonald, we
will obtain the factor $2^nn!$. But $2^nn!$ is the order of the Weyl group of $B_n$, so integration
over the main Weyl chamber will give us exactly 1.
\begin{theorem}
\label{weak}
The sequence of discrete probability measures with densities $\mu_N(\lambda)$ on the main Weyl chamber converges weakly to the continuous measure $\mu$ with density $\phi(\{x_i\})$.
\begin{proof}
  We will use the following criterion of weak convergence of measures \cite{bogachev2016slabaya}:\\
  \textit{Let }$\mathcal{E}$\textit{ be the class of open sets in metric space} $X$\textit{, which is closed under finite intersection, and every open set can be represented as countable or finite union of sets from} $\mathcal{E}$\textit{. Let $\mu_N, \mu$ be probability Borel measures such that} $\mu_N(E)\longrightarrow \mu(E)$\textit{ for all }$E\in \mathcal{E}$\textit{.Then the sequence $\mu_N$ converges weakly to} $\mu$.\\[2mm]
  Since $\mathcal{E}$ can be comprised of certain $n$-orthotopes, and for every $n$-orthotope
  $\mu_N(H_n)\longrightarrow \mu(H_n)$ following Theorem \ref{global}, the weak convergence
  $\mu_N\Rightarrow \mu$ is proven.
\end{proof}
\end{theorem}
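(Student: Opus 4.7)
The plan is to invoke the measure-theoretic criterion recalled in the statement: once we exhibit a family $\mathcal{E}$ of open sets that is closed under finite intersection and whose countable unions exhaust every open set, and verify $\mu_N(E)\to\mu(E)$ on $\mathcal{E}$, weak convergence follows automatically. All the analytic work has already been done in Theorem \ref{global}; the job here is purely to fit its conclusion into the hypotheses of the criterion.

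First, I take the ambient metric space to be the main Weyl chamber $C\subset\mathbb{R}^n$ with the induced Euclidean metric, and let $\mathcal{E}$ consist of the open $n$-orthotopes $(c_1,d_1)\times\cdots\times(c_n,d_n)$ with rational endpoints whose closure is contained in the interior of $C$. This family is manifestly closed under finite intersection, since the intersection of two coordinate-aligned open boxes is again such a box (possibly empty). Because rational-cornered open boxes form a countable basis for the product topology on $\mathbb{R}^n$, every open subset of $C$ is a countable union of members of $\mathcal{E}$, so the hypotheses on $\mathcal{E}$ are satisfied.

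Second, for each $E=(c_1,d_1)\times\cdots\times(c_n,d_n)\in\mathcal{E}$ I need $\mu_N(E)\to\mu(E)$. Theorem \ref{global} gives this for half-open boxes $[c_i,d_i)$. Since the limiting density $\phi$ is continuous on the interior of $C$, the $\mu$-measure of $\partial E$ is zero, so the convergence transfers from half-open to open by sandwiching $E$ between slightly shrunk and slightly enlarged half-open rational boxes $H^-\subset E\subset H^+$ with $\mu(H^+\setminus H^-)<\varepsilon$, applying Theorem \ref{global} to $H^\pm$, and letting $\varepsilon\to 0$.

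Third, I check that both $\mu_N$ and $\mu$ are genuine Borel probability measures on $C$: for $\mu_N$ this is built in, and for $\mu$ it follows from the Macdonald identity \eqref{mac} combined with $|W(B_n)|=2^n n!$, as noted in the discussion preceding the theorem. The cited criterion of Bogachev then delivers $\mu_N\Rightarrow\mu$. The only mildly technical point in this plan is the boundary-perturbation step of the second paragraph; I do not anticipate a genuine obstacle, since the entire argument is a standard bookkeeping reduction of weak convergence to the already-established convergence on a generating $\cap$-stable family.
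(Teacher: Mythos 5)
Your proposal is correct and follows essentially the same route as the paper: both invoke the Bogachev criterion with a $\cap$-stable generating family $\mathcal{E}$ of $n$-orthotopes and feed in Theorem \ref{global}. You merely supply details the paper leaves implicit (rational-cornered boxes as a countable basis, the passage from half-open to open boxes via the $\mu$-null boundary, and the normalization check via the Macdonald integral), which is a welcome but not substantively different elaboration.
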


\begin{example}
We illustrate the result with a simple example. In Fig. \ref{fig:b2-example} we plot values of
probability mass function $p_X(\lambda)$ (indicated by dots) and the probability density function
$\phi(\{x_i\})$ for the algebra $B_{2}$($so(5)$) and  $N=15$ power of second fundamental representation $L^{\omega_{2}}$. 
\end{example}
\begin{figure}[hbt]
    \includegraphics[width=10cm]{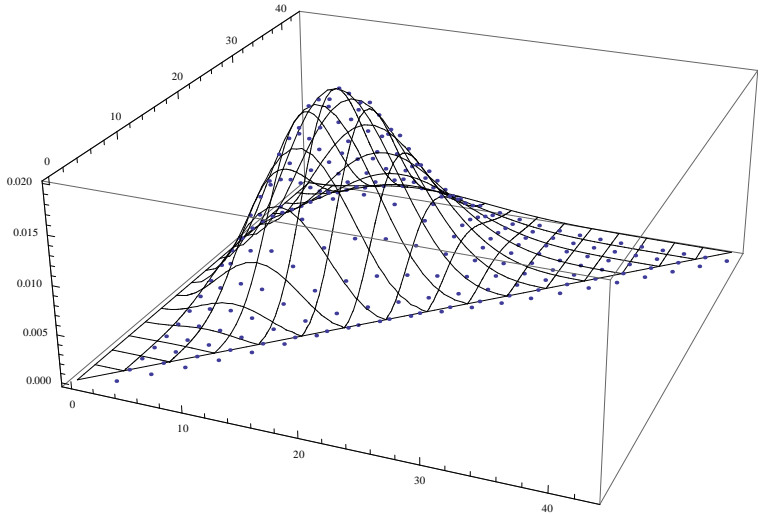}
    \caption{Values of probability mass function $p_X(\lambda)$ (indicated by dots) and the probability density function $\left(\frac{2}{\sqrt{N}}\right)^n\cdot\phi(\{x_i\})$ for $n=2$ and $N=15$ in the main Weyl chamber in the rescaled axes $x_i=\frac{a_i}{\sqrt{N}}$}.
  \label{fig:b2-example}
  \end{figure}

\section*{Conclusion and outlook}
\label{sec:conclusion-outlook}
The Lie algebra $so(2n+1)$  is, naturally, the Lie algebra of $(2n+1)\times(2n+1)$ orthogonal matrices, i.e $A^t=A^{-1}$. The Killing form on $so(2n+1)$ is proportional to bilinear form $\mathrm{tr}(A^tB)$ which is symmetric, positive definite and defines a Riemannian metric on $so(2n+1)$.

The corresponding Riemannian integration measure is $SO(2n+1)$  - invariant (with respect to $A\mapsto gAg^{-1}$). It is well-known that the integral of $SO(2n+1)$-invariant function over $so(2n+1)$ can be written in terms of its radial part (integration over eigenvalues):
\begin{equation}
\int f(A)dA=\int_{\mathbb{R}^n}f\left(\mathrm{diag}(a_1,\dots a_n,-a_n,\dots,-a_1)\right)\cdot
 \frac{2^{n^{2}}}{\pi^{n}n!}
 \prod_{1\leq i<j\leq n}
  \left(a_{i}^{2}-a_{j}^{2}\right)^{2}\prod_{k=1}^{n}a_{k}^{2}\;da_1\dots da_k
\end{equation}
Remarkably, the radial part of this measure gives exactly non-Gaussian factors in the limit of the Plancherel measure, exactly as in Kerov's work on $su(n+1)$\cite{kerov1986asymptotic}.
We expect this will hold for other simple Lie algebras.

\underline{Remark.} There is a simple relation between invariant measure on $so(2n+1)$ and the Haar measure on $SO(2n+1)$. If $dg$ is the Haar measure normalized as $\int_G dg=1$ and $f$ is a function on $G$ invariant with respect to conjugations, we have
\begin{equation}
  \label{eq:3}
  \int f(g) \mathrm{dHaar} = \frac{2^{n^{2}}}{\pi^{n}n!}\int_{[0,\pi]^{n}}f(\Theta_{1},\dots,\Theta_{n})\prod_{1\leq i<j\leq n}
  \left(\cos(\Theta_{i})-\cos(\Theta_{j})\right)^{2}\prod_{k=1}^{n}\sin^{2}\frac{\Theta_{k}}{2} d\Theta_{1}\dots d\Theta_{n}
\end{equation}
where $\lambda_{j}=e^{i\Theta_{j}}$ are distinct eigenvalues of generic $g\in G$.

Let $f_{\epsilon}(g)$ be a family of such functions supported on neighborhoods $U_{\epsilon}$ of $\Theta=0$ such that $\frac{1}{\epsilon}U_{\epsilon}\to W\subset \mathbb{R}_+^n$ as $\epsilon\to 0$, then
\begin{equation}
  \label{eq:4}
 \int_{[0,\pi]^{n}}f(\Theta)d\mu(\Theta)=\int_W \epsilon^{2n^2+n}f(\epsilon a)(1+\mathcal{O}(\epsilon))d\mu_0(a)
\end{equation}
where $d\mu(\Theta)$ is the radial part of the Haar measure as in \eqref{eq:3} and $d\mu_0$ is the
radial part of the measure on the Lie algebra $so(2n+1)$.

\section*{Acknowledgements}
\label{sec:acknowledgements}

We thank prof. Nikolai Reshetikhin for his guidance and prof. Fedor Petrov for valuable discussions.
This work is supported by the Russian Science Foundation under grant No. 18-11-00297.
\bibliography{measures,bibliography,listing}{} 
\bibliographystyle{utphys}

\end{document}